\newtheorem{theorem}{Theorem}[section]
\newtheorem{lemma}[theorem]{Lemma}
\newtheorem{remark}[theorem]{Remark}
\begin{document}

\title{Geometric properties of a new hyperbolic type metric}
\author{Xinyu Chen}
\address{School of Science, Zhejiang Sci-Tech University, Hangzhou 310018, China}
\email{ }
\author{Xiaohui Zhang*}
\address{School of Science, Zhejiang Sci-Tech University, Hangzhou 310018, China}
\email{xiaohui.zhang@zstu.edu.cn(*Corresponding author)}

\begin{abstract}
    We introduce a new distance function \(\tilde{S}_{G,c}\) in a metric space \((X,d)\), defined by
    \[
    \tilde{S}_{G,c}(x,y)=\log\left(1+\frac{c\,d(x,y)}{\sqrt{1+d(x,G)}\sqrt{1+d(y,G)}}\right)
    \]
    for \(x, y \in X\), where \(c\) is a positive real number and \(d(x,G)\) denotes the distance from \(x\) to the subset \(G \subset X\). We establish that \(\tilde{S}_{G,c}\) is a metric for \(c \geq 2\). Moreover, we show that the condition \(c \geq 2\) is sharp. This paper investigates geometric properties of the metric \(\tilde{S}_{G,c}\), including comparison inequalities with the triangular ratio metric and inclusion relations among metric balls. We demonstrate the quasiconformality of bilipschitz mappings with respect to \(\tilde{S}_{G,c}\) and study the distortion property of the metric \(\tilde{S}_{\partial\mathbb{B}^{n},c}\) under M\"obius transformations of the unit ball.
\end{abstract}

\keywords{hyperbolic type metric; triangular ratio metric; ball inclusion;  bilipschitz map; M\"obius transformation}
\subjclass[2020]{30F45, 51M05}

%===================================
\newcounter{minutes}\setcounter{minutes}{\time}
\divide\time by 60
\newcounter{hours}\setcounter{hours}{\time}
\multiply\time by 60 \addtocounter{minutes}{-\time}
%%%%%
\def\thefootnote{}
\footnotetext{
\texttt{\tiny File:~\jobname .tex,
          printed: \number\year-\number\month-\number\day,
          \thehours.\ifnum\theminutes<10{0}\fi\theminutes}
}
\makeatletter\def\thefootnote{\@arabic\c@footnote}\makeatother
%==================================================

\maketitle

\section{Introduction}

In geometric function theory, the hyperbolic metric plays a fundamental role and has numerous important applications \cite{BeardonMinda2007, KeenLakic2007}. In higher dimensions, however, the hyperbolic metric is defined only in balls and half-spaces. It is precisely this limitation that motivated the introduction of various hyperbolic-type metrics \cite{GehringOsgood1979, GehringPalka1976}. Examples include the Apollonian metric, the M\"obius-invariant Cassinian metric, Seittenranta's metric, the half-Apollonian metric, the scale-invariant Cassinian metric, and the triangular ratio metric \cite{Beardon1998, HaririKlenVuorinen2020, HaririVuorinenZhang2017, Hasto2006, HastoIbragimovLinden2006, HastoLinden2004, Ibragimov2016, Ibragimov2019, Seittenranta1999, Vuorinen1988, WangVuorinenZhang2021, WangXuVuorinen2021, XuWangZhang2022}. These metrics, often referred to as point-distance metrics, can be classified into one-point and two-point metrics.

Most hyperbolic-type metrics belong to families of relative metrics. A relative metric is defined on a domain \(D \subsetneq \mathbb{R}^{n}\) relative to its boundary. H\"ast\"o \cite{Hasto2002} introduced the generalized relative metric, termed the \(M\)-relative metric, defined on a domain \(D \subsetneq \mathbb{R}^{n}\) by
\[
\rho_{M,D}(x,y) = \sup_{p \in \partial D} \frac{|x-y|}{M(|x-p|, |y-p|)},
\]
where \(M\) is continuous on \((0,\infty) \times (0,\infty)\). An important example is the triangular ratio metric, obtained by choosing \(M(\alpha, \beta) = \alpha + \beta\), i.e.,
\[
t_{D}(x,y) = \sup_{a \in \partial D} \frac{|x-y|}{|x-a| + |y-a|}.
\]
The geometric properties of the triangular ratio metric have been extensively studied \cite{HaririVuorinenZhang2017, HastoLinden2004}.

Let \((X,d)\) be a metric space. In their seminal work, Bonk and Kleiner \cite{BonkKleiner2002} studied the distance function
\[
s_{p}(x,y) = \frac{d(x,y)}{[1 + d(x,p)][1 + d(y,p)]}
\]
for \(x, y \in X\) and a base point \(p \in X\). This function generally fails to satisfy the triangle inequality and thus is not a metric. Through a standard procedure, they defined the metric
\[
\hat{d}_{p}(x,y) := \inf \sum_{i=1}^{k} s_{p}(x_{i}, x_{i-1}),
\]
where the infimum is taken over all finite sequences \(x_{0}, \dots, x_{k} \in X\) with \(x_{0} = x\) and \(x_{k} = y\). They showed that the identity map \(f : (X,d) \to (X,\hat{d}_{p})\) is \(\psi\)-quasim\"obius on an unbounded locally compact metric space \(X\), for any \(p \in X\) and with \(\psi(t) = 16t\). Subsequent research \cite{CuiXiao2022, ZhangXiao2019} has further explored the properties of \(s_p\) and the related metric
\[
S_{p}(x,y) = \log\left(1 + \frac{d(x,y)}{[1 + d(x,p)][1 + d(y,p)]}\right).
\]

Building upon these developments, particularly the work on distance functions like \(s_p\), we introduce a new distance function \(\tilde{s}_{G}\) in a metric space \((X,d)\): for \(G \subset X\) and \(x, y \in X\),
\[
\tilde{s}_{G}(x,y) = \frac{d(x,y)}{\sqrt{1 + d(x)} \sqrt{1 + d(y)}},
\]
where \(d(x) = d(x,G)\) denotes the distance from \(x\) to \(G\). In general, \(\tilde{s}_{G}\) does not satisfy the triangle inequality. For instance, let \(X = \mathbb{R}\) and \(G = \{-4, 4\}\). Then \(\tilde{s}_{G}(3e_{1}, e_{1}) = 1/\sqrt{2}\), \(\tilde{s}_{G}(3e_{1}, 2e_{1}) = 1/\sqrt{6}\), \(\tilde{s}_{G}(e_{1}, 2e_{1}) = 1/(2\sqrt{3})\), and
\[
\tilde{s}_{G}(x,y) > \tilde{s}_{G}(x,z) + \tilde{s}_{G}(y,z).
\]
Thus, \(\tilde{s}_{G}\) is not a metric.

Since \(\tilde{s}_{G}\) itself may not satisfy the triangle inequality, we use it to define a new function
\[
\tilde{S}_{G,c}(x,y) = \log\left(1 + c\tilde{s}_{G}(x,y)\right) = \log\left(1 + \frac{c\,d(x,y)}{\sqrt{1 + d(x)} \sqrt{1 + d(y)}}\right)
\]
for \(x, y \in X\), where \(c > 0\) is a constant. We establish that \(\tilde{S}_{G,c}\) is a metric for \(c \geq 2\), and demonstrate that this condition is generally sharp. This paper investigates key geometric properties of \(\tilde{S}_{G,c}\), including comparison theorems and distortion properties under M\"obius mappings of the unit ball in Euclidean spaces.

The paper is organized as follows. Section 2 proves that \(\tilde{S}_{G,c}\) is a metric for \(c \geq 2\) and provides comparison inequalities with the triangular ratio metric \(t_G\), as well as inclusion relations between the corresponding metric balls. Section 3 studies the convergence of \(\tilde{S}_{G,c}\) under the Hausdorff distance. Section 4 establishes the quasiconformality of bilipschitz mappings with respect to \(\tilde{S}_{G,c}\). Finally, Section 5 analyzes the distortion of \(\tilde{S}_{\partial\mathbb{B}^{n},c}\) under M\"obius transformations of the unit ball \(\mathbb{B}^{n}\).

\section{Comparison Theorems for \(\tilde{S}_{G,c}\)}

We begin by proving that \(\tilde{S}_{G,c}\) is a metric for \(c \geq 2\), and then compare it with the hyperbolic metric and the triangular ratio metric \(t_G\).

\begin{theorem} \label{thm:metric}
    Let \((X,d)\) be a metric space and \(G \subset X\) be a nonempty subset. Then for \(c \geq 2\), the function \(\tilde{S}_{G,c}\) defines a metric on \(X\).
\end{theorem}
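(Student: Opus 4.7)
The plan is to verify the metric axioms in turn. Non-negativity, symmetry, and the equivalence $\tilde{S}_{G,c}(x,y)=0 \Leftrightarrow x=y$ are routine: $\tilde{s}_G$ inherits non-negativity and symmetry from $d$, while $\log(1+\cdot)$ is strictly increasing with $\log 1 = 0$, so $\tilde{S}_{G,c}(x,y)=0$ forces $d(x,y)=0$. All the substance lies in the triangle inequality, which is where I would concentrate the work.

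For the triangle inequality I would exponentiate and reformulate the desired estimate $\tilde{S}_{G,c}(x,y) \leq \tilde{S}_{G,c}(x,z)+\tilde{S}_{G,c}(z,y)$ as the multiplicative inequality
\[
1+c\,\tilde{s}_G(x,y) \leq \bigl(1+c\,\tilde{s}_G(x,z)\bigr)\bigl(1+c\,\tilde{s}_G(z,y)\bigr).
\]
Expanding and cancelling the constant term reduces this to
\[
\tilde{s}_G(x,y) \leq \tilde{s}_G(x,z)+\tilde{s}_G(z,y)+c\,\tilde{s}_G(x,z)\,\tilde{s}_G(z,y).
\]
Writing $a=d(x,z)$, $b=d(z,y)$, $u=d(x)$, $v=d(y)$, $w=d(z)$, multiplying through by $\sqrt{(1+u)(1+v)}$, and using $d(x,y) \leq a+b$ in $(X,d)$, it would suffice to prove
\[
a\left(1-\sqrt{\tfrac{1+v}{1+w}}\right)+b\left(1-\sqrt{\tfrac{1+u}{1+w}}\right) \leq c\,\tfrac{ab}{1+w}.
\]

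The crux is the pointwise bound $1-\sqrt{(1+v)/(1+w)} \leq b/(1+w)$ together with its $(u,a)$-analogue. When $v \geq w$ the bracket on the left is non-positive and nothing needs to be shown. Otherwise, the $1$-Lipschitz property of $x \mapsto d(x,G)$ (an immediate consequence of the triangle inequality for $d$) gives $w-v \leq b$, hence $(1+v)/(1+w) \geq 1-b/(1+w)$, and the elementary inequality $\sqrt{1-t} \geq 1-t$ for $t \in [0,1]$ then closes the argument; the case $b > 1+w$ is handled by the trivial bound $1-\sqrt{\cdot} \leq 1 < b/(1+w)$. Summing the two bounds, the left-hand side of the inequality above is at most $2ab/(1+w)$, and this is absorbed by the right-hand side precisely when $c \geq 2$.

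The main obstacle I anticipate is pinning down the sharp threshold $c=2$. One must use the near-tight inequality $\sqrt{1-t} \geq 1-t$ rather than a softer Taylor-type estimate, and handle the signs of $v-w$ and $u-w$ case by case; otherwise a factor is lost and one arrives only at a weaker constant. The remaining work is purely algebraic manipulation and requires no auxiliary machinery beyond the $1$-Lipschitz property of the distance-to-set function.
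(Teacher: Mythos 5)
Your proof is correct and follows essentially the same route as the paper's: after reducing the triangle inequality to $\tilde{s}_G(x,y)\le \tilde{s}_G(x,z)+\tilde{s}_G(z,y)+c\,\tilde{s}_G(x,z)\,\tilde{s}_G(z,y)$, both arguments rest on the $1$-Lipschitz property of $d(\cdot,G)$ together with the key estimate $1-\sqrt{(1+d(y))/(1+d(z))}\le d(y,z)/(1+d(z))$ (which is exactly the paper's inequality \eqref{eq2} after clearing denominators), each of whose two copies contributes one factor of $d(x,z)d(y,z)/(1+d(z))$ and hence the threshold $c\ge 2$. The only difference is organizational: you treat the two bracketed terms symmetrically with a sign dichotomy in each, whereas the paper runs a three-case analysis on the ordering of $d(x),d(y),d(z)$; the mathematical content is identical.
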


\begin{proof}
    Let \(x, y, z \in X\). Clearly, \(\tilde{S}_{G,c}(x,y) \geq 0\), \(\tilde{S}_{G,c}(x,y) = \tilde{S}_{G,c}(y,x)\), and \(\tilde{S}_{G,c}(x,y) = 0\) if and only if \(x = y\). It therefore remains to verify the triangle inequality:
    \[
    \tilde{S}_{G,c}(x,y) \leq \tilde{S}_{G,c}(x,z) + \tilde{S}_{G,c}(y,z) \quad \text{for all } x, y, z \in X.
    \]
    This is equivalent to
    \[
    \log\left(1 + \frac{c\,d(x,y)}{\sqrt{1+d(x)}\sqrt{1+d(y)}}\right) \leq \log\left( \left(1 + \frac{c\,d(x,z)}{\sqrt{1+d(x)}\sqrt{1+d(z)}}\right) \left(1 + \frac{c\,d(y,z)}{\sqrt{1+d(y)}\sqrt{1+d(z)}}\right) \right),
    \]
    which in turn is equivalent to
    \begin{equation} \label{eq:triangle-equiv}
    \frac{d(x,y)}{\sqrt{1+d(x)}\sqrt{1+d(y)}} \leq \frac{d(x,z)}{\sqrt{1+d(x)}\sqrt{1+d(z)}} + \frac{d(y,z)}{\sqrt{1+d(y)}\sqrt{1+d(z)}} + \frac{c\,d(x,z)d(y,z)}{(1+d(z))\sqrt{1+d(x)}\sqrt{1+d(y)}}.
    \end{equation}
    Without loss of generality, assume \(d(x) \leq d(y)\). We consider three cases.

    \textbf{Case 1: \(d(z) \leq d(x) \leq d(y)\).} By the triangle inequality for \(d\),
    \begin{align*}
    \frac{d(x,y)}{\sqrt{1+d(x)}\sqrt{1+d(y)}} &\leq \frac{d(x,z)}{\sqrt{1+d(x)}\sqrt{1+d(y)}} + \frac{d(y,z)}{\sqrt{1+d(x)}\sqrt{1+d(y)}} \\
    &\leq \frac{d(x,z)}{\sqrt{1+d(x)}\sqrt{1+d(z)}} + \frac{d(y,z)}{\sqrt{1+d(y)}\sqrt{1+d(z)}},
    \end{align*}
    which establishes \eqref{eq:triangle-equiv}.

    \textbf{Case 2: \(d(x) \leq d(z) \leq d(y)\).} Since \(1 + d(x) \leq \sqrt{1+d(x)}\sqrt{1+d(z)}\), the triangle inequality gives
    \[
    1 + d(z) \leq 1 + d(x) + d(x,z) \leq \sqrt{1+d(x)}\sqrt{1+d(z)} + d(x,z),
    \]
    which implies
    \[
    \frac{1}{\sqrt{1+d(x)}} \leq \frac{1}{\sqrt{1+d(z)}} + \frac{d(x,z)}{(1+d(z))\sqrt{1+d(x)}}.
    \]
    Multiplying both sides by \(d(y,z)/\sqrt{1+d(y)}\) yields
    \begin{equation} \label{eq:case2-1}
    \frac{d(y,z)}{\sqrt{1+d(x)}\sqrt{1+d(y)}} \leq \frac{d(y,z)}{\sqrt{1+d(z)}\sqrt{1+d(y)}} + \frac{d(x,z)d(y,z)}{(1+d(z))\sqrt{1+d(x)}\sqrt{1+d(y)}}.
    \end{equation}
    Now, applying the triangle inequality \(d(x,y) \leq d(x,z) + d(y,z)\) gives
    \begin{equation} \label{eq:case2-2}
    \frac{d(x,y)}{\sqrt{1+d(x)}\sqrt{1+d(y)}} \leq \frac{d(x,z)}{\sqrt{1+d(x)}\sqrt{1+d(y)}} + \frac{d(y,z)}{\sqrt{1+d(x)}\sqrt{1+d(y)}}.
    \end{equation}
    Furthermore, since \(d(z) \leq d(y)\),
    \begin{equation} \label{eq:case2-3}
    \frac{d(x,z)}{\sqrt{1+d(x)}\sqrt{1+d(y)}} \leq \frac{d(x,z)}{\sqrt{1+d(x)}\sqrt{1+d(z)}}.
    \end{equation}
    Combining inequalities \eqref{eq:case2-1}, \eqref{eq:case2-2}, and \eqref{eq:case2-3}, we obtain
    \[
    \frac{d(x,y)}{\sqrt{1+d(x)}\sqrt{1+d(y)}} \leq \frac{d(x,z)}{\sqrt{1+d(x)}\sqrt{1+d(z)}} + \frac{d(y,z)}{\sqrt{1+d(y)}\sqrt{1+d(z)}} + \frac{d(x,z)d(y,z)}{(1+d(z))\sqrt{1+d(x)}\sqrt{1+d(y)}},
    \]
    which implies \eqref{eq:triangle-equiv} for \(c \geq 1\).

    \textbf{Case 3: \(d(x) \leq d(y) \leq d(z)\).} Since \(d(x) \leq d(z)\) and \(d(y) \leq d(z)\), arguments similar to those in Case 2 yield
    \[
    \frac{d(y,z)}{\sqrt{1+d(x)}\sqrt{1+d(y)}} \leq \frac{d(y,z)}{\sqrt{1+d(z)}\sqrt{1+d(y)}} + \frac{d(x,z)d(y,z)}{(1+d(z))\sqrt{1+d(x)}\sqrt{1+d(y)}}
    \]
    and
    \[
    \frac{d(x,z)}{\sqrt{1+d(x)}\sqrt{1+d(y)}} \leq \frac{d(x,z)}{\sqrt{1+d(z)}\sqrt{1+d(x)}} + \frac{d(x,z)d(y,z)}{(1+d(z))\sqrt{1+d(x)}\sqrt{1+d(y)}}.
    \]
    Therefore,
   \begin{align*}
    \frac{d(x,y)}{\sqrt{1+d(x)}\sqrt{1+d(y)}} &\leq \frac{d(x,z)}{\sqrt{1+d(x)}\sqrt{1+d(y)}} + \frac{d(y,z)}{\sqrt{1+d(x)}\sqrt{1+d(y)}}\\
    &\leq \frac{d(x,z)}{\sqrt{1+d(z)}\sqrt{1+d(x)}} + \frac{d(y,z)}{\sqrt{1+d(z)}\sqrt{1+d(y)}} + \frac{2d(x,z)d(y,z)}{(1+d(z))\sqrt{1+d(x)}\sqrt{1+d(y)}},
    \end{align*}
    which implies \eqref{eq:triangle-equiv} for \(c \geq 2\). This completes the proof.
\end{proof}

\begin{remark} \label{rem:sharpness}
    The condition \(c \geq 2\) in Theorem \ref{thm:metric} is sharp. To see this, consider \(X = \mathbb{R}\) and \(G = \{-M, M\}\) for \(M > 0\). Define \(A(x,y) = d(x,y)/(\sqrt{1+d(x)}\sqrt{1+d(y)})\). If \(\tilde{S}_{G,c}\) is a metric, then
    \[
    A(x,y) \leq A(x,z) + A(y,z) + c A(x,z)A(y,z),
    \]
    which is equivalent to
    \[
    c \geq \frac{A(x,y) - A(x,z) - A(y,z)}{A(x,z)A(y,z)} \equiv h(x,y,z)
    \]
    for all distinct \(x, y, z \in \mathbb{R}\). Setting \(x = -y = t\) for \(t \in (0, M]\) and \(z = 0\), a straightforward computation gives
    \[
    h(x,y,z) = \frac{2\sqrt{1+M}}{\sqrt{1+M} + \sqrt{1+M-t}}.
    \]
    Taking \(t = M\), we find \(c \geq 2\sqrt{1+M}/(\sqrt{1+M} + 1) \to 2\) as \(M \to +\infty\).
\end{remark}

Next, we compare the metric \(\tilde{S}_{\partial\mathbb{B}^{n},c}\) with the hyperbolic metric \(\rho_{\mathbb{B}^{n}}\) in the unit ball \(\mathbb{B}^{n}\) and with the triangular ratio metric \(s_D\) for a proper subdomain \(D\) of \(\mathbb{R}^{n}\). Recall that the hyperbolic metric \(\rho_{\mathbb{B}^{n}}\) satisfies \cite{Beardon1995}
\[
\tanh\frac{\rho_{\mathbb{B}^{n}}(x,y)}{2} = \frac{|x-y|}{\sqrt{|x-y|^{2} + (1-|x|^{2})(1-|y|^{2})}}, \quad \text{for all } x, y \in \mathbb{B}^{n} \setminus \{0\}.
\]

\begin{theorem} \label{thm:comparison-hyperbolic}
    For all \(x, y \in \mathbb{B}^{n} \setminus \{0\}\),
    \[
    \tilde{S}_{\partial\mathbb{B}^{n},c}(x,y) \leq \log\left(1 + 2c \cdot \tanh\frac{\rho_{\mathbb{B}^{n}}(x,y)}{2} \right).
    \]
\end{theorem}

\begin{proof}
    For \(x, y \in \mathbb{B}^{n}\), we have
    \[
    \tilde{S}_{\partial\mathbb{B}^{n},c}(x,y) = \log\left(1 + \frac{c|x-y|}{\sqrt{2-|x|}\sqrt{2-|y|}} \right)
    \]
    and
    \[
    \frac{e^{\tilde{S}_{\partial\mathbb{B}^{n},c}(x,y)} - 1}{\tanh(\rho_{\mathbb{B}^{n}}(x,y)/2)} = c \sqrt{ \frac{|x-y|^{2} + (1-|x|^{2})(1-|y|^{2})}{(2-|x|)(2-|y|)} }.
    \]
    Using the inequality \(|x-y| \leq |x| + |y|\), we obtain
    \[
    \frac{e^{\tilde{S}_{\partial\mathbb{B}^{n},c}(x,y)} - 1}{\tanh(\rho_{\mathbb{B}^{n}}(x,y)/2)} \leq c \sqrt{ \frac{(|x|+|y|)^{2} + (1-|x|^{2})(1-|y|^{2})}{(2-|x|)(2-|y|)} } = \frac{c(1 + |x||y|)}{\sqrt{(2-|x|)(2-|y|)}} \leq 2c,
    \]
    from which the desired inequality follows.
\end{proof}

\begin{theorem} \label{thm:comparison-triangular}
    Let \(G \subsetneq \mathbb{R}^{n}\) be a domain. For all \(x, y \in G\),
    \[
    \log\left(1 + \frac{2c\,d_{xy}\,t_{G}(x,y)}{1 + d_{xy}} \right) \leq \tilde{S}_{\partial G,c}(x,y) \leq \log\left(1 + \frac{2c\,d_{xy}\,t_{G}(x,y)}{(1 + d_{xy})(1 - t_{G}(x,y))} \right),
    \]
    where \(d_{xy} = \min\{d(x), d(y)\}\) and \(c \geq 2\).
\end{theorem}

\begin{proof}
    For \(x, y \in G\), let \(p_{0} \in \partial G\) such that \(d(x) = |x - p_{0}|\). By the triangle inequality, for any \(p \in \partial G\),
    \[
    |x-p| + |p-y| \leq |x-p_{0}| + |y-p_{0}| \leq 2|x-p_{0}| + |x-y| = 2d(x) + |x-y|,
    \]
    so that
    \[
    \inf_{p \in \partial G} \{ |x-p| + |p-y| \} \leq 2d(x) + |x-y|.
    \]
    Similarly,
    \[
    \inf_{p \in \partial G} \{ |x-p| + |p-y| \} \leq 2d(y) + |x-y|.
    \]
    Hence,
    \[
    \inf_{p \in \partial G} \{ |x-p| + |p-y| \} \leq 2d_{xy} + |x-y|.
    \]
    By the definition of the triangular ratio metric,
    \begin{align}
    t_{G}(x,y)& = \sup_{p \in \partial G} \frac{|x-y|}{|x-p| + |p-y|} = \frac{|x-y|}{\inf_{p} \{ |x-p| + |p-y| \}}\nonumber\\
    & \geq \frac{|x-y|}{2d_{xy} + |x-y|} = \frac{|x-y|/(1+d_{xy})}{2d_{xy}/(1+d_{xy}) + |x-y|/(1+d_{xy})}.\label{eq:tG-lower-bound}
    \end{align}
    From the definition of \(\tilde{S}_{\partial G,c}\),
    \[
    e^{\tilde{S}_{\partial G,c}(x,y)} - 1 = \frac{c|x-y|}{\sqrt{1+d(x)}\sqrt{1+d(y)}} \leq \frac{c|x-y|}{1 + d_{xy}}.
    \]
    Substituting into \eqref{eq:tG-lower-bound} yields
    \[
    t_{G}(x,y) \geq \frac{e^{\tilde{S}_{\partial G,c}(x,y)} - 1}{2c\,d_{xy}/(1+d_{xy}) + e^{\tilde{S}_{\partial G,c}(x,y)} - 1},
    \]
    which is equivalent to
    \[
    e^{\tilde{S}_{\partial G,c}(x,y)} - 1 \leq \frac{2c\,d_{xy}\,t_{G}(x,y)}{(1+d_{xy})(1 - t_{G}(x,y))},
    \]
    establishing the upper bound.

    For the lower bound, observe that
    \[
    t_{G}(x,y) \leq \frac{|x-y|}{d(x) + d(y)}
    \]
    and
    \begin{align*}
    e^{\tilde{S}_{\partial G,c}(x,y)} - 1 &= \frac{c|x-y|}{\sqrt{1+d(x)}\sqrt{1+d(y)}} \geq \frac{2c|x-y|}{2 + d(x) + d(y)} \\
    &= \frac{2c|x-y|/(d(x)+d(y))}{1 + 2/(d(x)+d(y))} \geq \frac{2c|x-y|/(d(x)+d(y))}{1 + 1/d_{xy}} \\
    &\geq \frac{2c\,t_{G}(x,y)}{1 + 1/d_{xy}} = \frac{2c\,d_{xy}\,t_{G}(x,y)}{1 + d_{xy}}.
    \end{align*}
    This implies the lower bound and completes the proof.
\end{proof}

Let \((X,d)\) be a metric space. A metric ball \(B_{d}(x,r)\) is defined as
\[
B_{d}(x,r) = \{ y \in X : d(x,y) < r \}.
\]
The following theorem describes inclusion relations between balls in the triangular ratio metric \(t_D\) and the metric \(\tilde{S}_{\partial G,c}\).

\begin{theorem} \label{thm:ball-inclusion}
    Let \(D \subsetneq \mathbb{R}^{n}\) be a domain, \(x \in D\), and \(r > 0\). Then
    \[
    B_{t}(x, \ell) \subset B_{\tilde{S}}(x, r) \subset B_{t}(x, L),
    \]
    where
    \[
    \ell = \frac{(e^{r} - 1)(1 + d(x))}{(e^{r} - 1)(1 + d(x)) + 2c\,d(x)}, \quad L = \frac{(e^{r} - 1)(1 + d(x))}{c\,d(x)}.
    \]
\end{theorem}

\begin{proof}
    If \(y \in B_{t}(x, \ell)\), then by \eqref{eq:tG-lower-bound},
    \[
    \frac{|x-y|/(1+d_{xy})}{2d_{xy}/(1+d_{xy}) + |x-y|/(1+d_{xy})} \leq t_{D}(x,y) < \ell \leq \frac{(e^{r} - 1)(1 + d_{xy})}{(e^{r} - 1)(1 + d_{xy}) + 2c\,d_{xy}},
    \]
    which implies
    \[
    \frac{|x-y|}{1 + d_{xy}} < \frac{e^{r} - 1}{c}.
    \]
    Therefore,
    \[
    e^{\tilde{S}_{\partial D,c}(x,y)} - 1 = \frac{c|x-y|}{\sqrt{1+d(x)}\sqrt{1+d(y)}} \leq \frac{c|x-y|}{1 + d_{xy}} < e^{r} - 1,
    \]
    so \(\tilde{S}_{\partial D,c}(x,y) < r\) and \(y \in B_{\tilde{S}}(x,r)\). This proves \(B_{t}(x, \ell) \subset B_{\tilde{S}}(x, r)\).

    Now suppose \(y \in B_{\tilde{S}}(x, r)\). Then, from the inequality
    \[
    \frac{2c|x-y|}{2 + d(x) + d(y)} \leq e^{\tilde{S}_{\partial G,c}(x,y)} - 1 < e^{r} - 1,
    \]
    we obtain
    \[
    \frac{|x-y|}{d(x) + d(y)} < \frac{(e^{r} - 1)(1 + 2/(d(x)+d(y)))}{2c} \leq \frac{(e^{r} - 1)(1 + d(x))}{c\,d(x)} = L.
    \]
    Since \(t_{D}(x,y) \leq |x-y|/(d(x) + d(y))\), it follows that \(t_{D}(x,y) < L\), hence \(B_{\tilde{S}}(x, r) \subset B_{t}(x, L)\).
\end{proof}

\section{Convergence of \(\tilde{S}_{G,c}\) under Hausdorff Distance}

This section studies the convergence behavior of the metric \(\tilde{S}_{G,c}\) when the subset \(G\) varies with respect to the Hausdorff distance. Let \((X,d)\) be a metric space. For \(z \in X\) and \(A \subset X\), the distance from \(z\) to \(A\) is defined as \(d(z,A) = \inf \{ d(z,x) : x \in A \}\). The Hausdorff distance between subsets \(A, B \subset X\) is given by
\[
d_H(A,B) = \max\left\{ \sup_{b \in B} d(b,A),\, \sup_{a \in A} d(a,B) \right\} = \inf \left\{ \varepsilon > 0 : A \subset B(\varepsilon),\, B \subset A(\varepsilon) \right\},
\]
where \(A(\varepsilon) = \{ y \in X : d(y,A) < \varepsilon \}\).

We begin with a key estimate relating the distance functions to the Hausdorff metric.

\begin{lemma} \label{lem:Hausdorff-distance}
Let \(G_1\) and \(G_2\) be two nonempty bounded closed subsets of a metric space \((X,d)\). Then
\[
\left| d(x,G_2) - d(x,G_1) \right| \leq d_H(G_1,G_2)
\]
for all \(x \in X\).
\end{lemma}

\begin{proof}
Without loss of generality, assume \(d(x,G_2) \geq d(x,G_1)\). For any \(\varepsilon > 0\), there exists \(x_1 \in G_1\) such that
\[
d(x,x_1) - \varepsilon \leq d(x,G_1) \leq d(x,x_1).
\]
Then for any \(x_2 \in G_2\),
\[
d(x,G_2) - d(x,G_1) \leq d(x,G_2) - d(x,x_1) + \varepsilon \leq d(x,x_2) - d(x,x_1) + \varepsilon \leq d(x_1,x_2) + \varepsilon.
\]
Taking the infimum over \(x_2 \in G_2\) yields
\[
d(x,G_2) - d(x,G_1) \leq d(x_1,G_2) + \varepsilon \leq d_H(G_1,G_2) + \varepsilon.
\]
Since \(\varepsilon > 0\) is arbitrary, the desired inequality follows.
\end{proof}

Using this lemma, we establish the continuity of \(\tilde{S}_{G,c}\) with respect to Hausdorff convergence.

\begin{theorem} \label{thm:Hausdorff-convergence}
Let \(\{G_n\}_{n=1}^{\infty}\) be a sequence of nonempty bounded closed subsets of a metric space \((X,d)\). If \(d_H(G_n, G) \to 0\) as \(n \to \infty\), then
\[
\lim_{n \to \infty} \tilde{S}_{G_n,c}(x,y) = \tilde{S}_{G,c}(x,y)
\]
for all \(x, y \in X\).
\end{theorem}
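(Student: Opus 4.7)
The plan is to apply Lemma \ref{Hausdorff lem} to reduce the theorem to a pointwise continuity statement about the function that defines $\tilde{S}_{G,c}$. For fixed $x,y \in X$, I set
\[
a_n := d(x,G_n),\quad b_n := d(y,G_n),\quad a := d(x,G),\quad b := d(y,G).
\]
Lemma \ref{Hausdorff lem} (applied to $x$ and to $y$) gives
\[
|a_n - a| \le d_H(G_n,G) \quad \text{and}\quad |b_n - b| \le d_H(G_n,G),
\]
so the hypothesis $d_H(G_n,G) \to 0$ immediately yields $a_n \to a$ and $b_n \to b$.

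Next, with $D := d(x,y) \ge 0$ held fixed, I would introduce the auxiliary function
\[
F(u,v) := \log\!\left(1 + \frac{cD}{\sqrt{1+u}\,\sqrt{1+v}}\right), \qquad (u,v) \in [0,\infty)\times[0,\infty).
\]
By definition, $\tilde{S}_{G_n,c}(x,y) = F(a_n,b_n)$ and $\tilde{S}_{G,c}(x,y) = F(a,b)$, so the theorem reduces to showing $F(a_n,b_n) \to F(a,b)$. This is a standard continuity check: the map $(u,v) \mapsto \sqrt{1+u}\sqrt{1+v}$ is continuous and bounded below by $1$ on $[0,\infty)^2$, hence the argument of the logarithm lies in $[1, 1 + cD]$ and is continuous in $(u,v)$, so $F$ is continuous as a composition of continuous functions. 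Continuity of $F$ at $(a,b)$ combined with the convergences $a_n \to a$ and $b_n \to b$ established above gives the desired conclusion.

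There is essentially no obstacle: all the real work has been absorbed into Lemma \ref{Hausdorff lem}, and the remaining step is elementary. The only point worth noting explicitly is that $F$ has no singularities inside $[0,\infty)^2$ because $\sqrt{1+u}\sqrt{1+v} \ge 1$ keeps the argument of $\log$ bounded away from $0$, so one does not need any lower bound on $d(x)$ or $d(y)$ to pass to the limit.
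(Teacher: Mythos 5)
Your proposal is correct and follows essentially the same route as the paper: both apply Lemma \ref{Hausdorff lem} to each of $x$ and $y$ to conclude $d(x,G_n)\to d(x,G)$ and $d(y,G_n)\to d(y,G)$, and then pass to the limit inside the defining expression by continuity. Your version merely makes the continuity step more explicit (via the auxiliary function $F$ and the observation that the argument of the logarithm stays in $[1,1+cD]$), which the paper leaves implicit.
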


\begin{proof}
Let \(\varepsilon_n = d_H(G_n, G)\). By Lemma \ref{lem:Hausdorff-distance}, for all \(x \in X\),
\[
d(x,G) - \varepsilon_n \leq d(x,G_n) \leq d(x,G) + \varepsilon_n.
\]
Since \(\varepsilon_n \to 0\) as \(n \to \infty\), it follows that \(\lim_{n \to \infty} d(x,G_n) = d(x,G)\). Therefore,
\begin{align*}
\lim_{n \to \infty} \tilde{S}_{G_n,c}(x,y) &= \lim_{n \to \infty} \log\left(1 + \frac{c\,d(x,y)}{\sqrt{1 + d(x,G_n)} \sqrt{1 + d(y,G_n)}} \right) \\
&= \log\left(1 + \frac{c\,d(x,y)}{\sqrt{1 + \lim_{n \to \infty} d(x,G_n)} \sqrt{1 + \lim_{n \to \infty} d(y,G_n)}} \right) \\
&= \log\left(1 + \frac{c\,d(x,y)}{\sqrt{1 + d(x,G)} \sqrt{1 + d(y,G)}} \right) = \tilde{S}_{G,c}(x,y),
\end{align*}
which completes the proof.
\end{proof}

\section{Quasiconformality of Bilipschitz Mappings in the Metric \(\tilde{S}_{G,c}\)}

We now investigate the quasiconformality of bilipschitz mappings with respect to the metric \(\tilde{S}_{G,c}\). Let us first recall the relevant definitions.

Let \((X,d_X)\) and \((Y,d_Y)\) be metric spaces, and let \(L \geq 1\). A mapping \(f : X \to Y\) is called \emph{\(L\)-Lipschitz} if
\[
d_Y(f(x),f(y)) \leq L\,d_X(x,y) \quad \text{for all } x, y \in X.
\]
It is called \emph{\(L\)-bilipschitz} if
\[
\frac{1}{L}d_X(x,y) \leq d_Y(f(x),f(y)) \leq L\,d_X(x,y) \quad \text{for all } x, y \in X.
\]

For a homeomorphism \(f : X \to Y\) between metric spaces, define for \(x \in X\) and \(r > 0\):
\[
L_f(x,r) = \sup \{ d_Y(f(x),f(y)) : d_X(x,y) \leq r \}, \quad
l_f(x,r) = \inf \{ d_Y(f(x),f(y)) : d_X(x,y) \geq r \}.
\]
The \emph{linear dilatation} of \(f\) at \(x\) is defined as
\[
H_f(x) = \limsup_{r \to 0} \frac{L_f(x,r)}{l_f(x,r)}.
\]
The mapping \(f\) is called \emph{quasiconformal} if there exists a constant \(H < \infty\) such that \(H_f(x) \leq H\) for all \(x \in X\). For comprehensive treatments of quasiconformal mappings, we refer to \cite{GehringMartinPalka2017, Vaisala1971}.

The following theorem provides a sharp estimate for the linear dilatation of a bilipschitz mapping with respect to the metrics \(\tilde{S}_{G,c}\).

\begin{theorem} \label{thm:quasiconformality}
Let \(G\) be a nonempty proper subdomain of a proper metric space \((X,d_X)\), and let \(f : (G,d_X) \to (f(G),d_Y) \subset (Y,d_Y)\) be a sense-preserving homeomorphism that is \(L\)-bilipschitz with respect to the metrics \(\tilde{S}_{\partial G,c}\) and \(\tilde{S}_{\partial f(G),c}\), i.e.,
\[
\frac{1}{L} \tilde{S}_{\partial G,c}(x,y) \leq \tilde{S}_{\partial f(G),c}(f(x),f(y)) \leq L\, \tilde{S}_{\partial G,c}(x,y)
\]
for all \(x, y \in G\). Then \(f\) is quasiconformal with respect to the original metrics \(d_X\) and \(d_Y\), with linear dilatation satisfying \(H(f) \leq L^2\).
\end{theorem}

\begin{proof}
We first establish two-sided estimates for the metric \(\tilde{S}_{\partial D,c}\). Let \((Z,d)\) be a metric space and \(D \subset Z\). For \(x, y \in D\),
\begin{align*}
\tilde{S}_{\partial D,c}(x,y) &= \log\left(1 + \frac{c\,d(x,y)}{\sqrt{1+d(x)}\sqrt{1+d(y)}} \right) \\
&\leq \log\left(1 + \frac{c\,d(x,y)}{\min\{1+d(x), 1+d(y)\}} \right),
\end{align*}
and
\[
\tilde{S}_{\partial D,c}(x,y) \geq \log\left(1 + \frac{2c\,d(x,y)}{2 + d(x) + d(y)} \right).
\]
These inequalities imply the following bounds:
\begin{equation} \label{eq:metric-bounds}
\frac{\min\{1+d(x), 1+d(y)\}}{c} \left(e^{\tilde{S}_{\partial D,c}} - 1\right) \leq d(x,y) \leq \frac{2 + d(x) + d(y)}{2c} \left(e^{\tilde{S}_{\partial D,c}} - 1\right).
\end{equation}

Now, fix \(x \in G\) and let \(r > 0\). Since the metric space $(X,d_X)$ is proper and $f$ is a homeomorphism, we can choose points \(z, w \in G\) with \(d_X(x,z) = d_X(x,w) = r\) such that
\[
d_Y(f(x),f(z)) = \sup \{ d_Y(f(x),f(y)) : d_X(x,y) \leq r \},
\]
\[
d_Y(f(x),f(w)) = \inf \{ d_Y(f(x),f(y)) : d_X(x,y) \geq r \}.
\]
Define
\[
F(x,z,w) = \frac{2 + d_Y(f(x)) + d_Y(f(z))}{2 \min\{1 + d_Y(f(x)), 1 + d_Y(f(w))\}},
\]
and observe that \(F(x,z,w) \to 1\) as \(r \to 0\).

Applying the estimates \eqref{eq:metric-bounds} and using the asymptotic relations \((1+t)^\alpha - 1 \sim \alpha t\) and \(\log(1+u) \sim u\) as \(t, u \to 0\), we obtain
\begin{align*}
\frac{d_Y(f(x),f(z))}{d_Y(f(x),f(w))}
&\leq F(x,z,w) \frac{e^{\tilde{S}_{\partial f(G),c}(f(x),f(z))} - 1}{e^{\tilde{S}_{\partial f(G),c}(f(x),f(w))} - 1} \\
&\leq F(x,z,w) \frac{e^{L \tilde{S}_{\partial G,c}(x,z)} - 1}{e^{\tilde{S}_{\partial G,c}(x,w)/L} - 1} \\
&\leq F(x,z,w) \frac{\left(1 + \frac{c\,d_X(x,z)}{\min\{1+d_X(x),1+d_X(z)\}} \right)^L - 1}{\left(1 + \frac{2c\,d_X(x,w)}{2 + d_X(x) + d_X(w)} \right)^{1/L} - 1} \to L^2
\end{align*}
as \(r = d_X(x,z) = d_X(x,w) \to 0\). Therefore,
\[
H(f) = \limsup_{r \to 0} \frac{d_Y(f(x),f(z))}{d_Y(f(x),f(w))} \leq L^2,
\]
which completes the proof.
\end{proof}

\section{Distortion Properties under M\"obius Transformations}

In this final section, we study the distortion of the metric \(\tilde{S}_{G,c}\) under M\"obius transformations. We work in the \(n\)-dimensional Euclidean space \(\mathbb{R}^n\) with \(n \geq 2\). Denote by \(B^n(x,r) = \{ y \in \mathbb{R}^n : |x-y| < r \}\) the open ball and by \(S^{n-1}(a,r) = \{ x \in \mathbb{R}^n : |x-a| = r \}\) the sphere. For brevity, write \(\mathbb{B}^n = B^n(0,1)\) and \(\mathbb{S}^{n-1} = S^{n-1}(0,1)\).

The reflection in \(S^{n-1}(a,r)\) is the map \(\psi\) defined by
\[
\psi(x) = a + \frac{r^2(x-a)}{|x-a|^2} \quad \text{for } x \in \mathbb{R}^n \setminus \{a\},
\]
with \(\psi(a) = \infty\) and \(\psi(\infty) = a\). The reflection in \(\mathbb{S}^{n-1}\) is denoted by \(x \mapsto x^*\), where \(x^* = x/|x|^2\) for \(x \neq 0, \infty\). For any reflection \(\psi\) in \(S^{n-1}(a,r)\), we have the formula \cite{Beardon1995}
\[
|\psi(x) - \psi(y)| = \frac{r^2 |x-y|}{|x-a|\,|y-a|}, \quad x, y \in \mathbb{R}^n \setminus \{a\}.
\]

We analyze the distortion of \(\tilde{S}_{\mathbb{S}^{n-1},c}\) under M\"obius transformations of the unit ball. By \cite[Theorem 3.5.1]{Beardon1995}, if \(\phi\) is a M\"obius transformation with \(\phi(\mathbb{B}^n) = \mathbb{B}^n\), then \(\phi(x) = A(\sigma(x))\), where \(\sigma\) is a reflection in a sphere orthogonal to \(\mathbb{S}^{n-1}\) and \(A\) is an orthogonal matrix.

We will use the following lemmas.

\begin{lemma}[\cite{SimicVuorinenWang2015}] \label{lem:mobius-distortion}
Let \(a, b \in \mathbb{B}^n\). Then
\[
\frac{||b| - |a||}{1 - |a||b|} \leq \frac{|b - a|}{|a||b - a^*|} \leq \frac{|b| + |a|}{1 + |a||b|}.
\]
\end{lemma}

\begin{lemma} \label{lem:log-inequality}
For \(x, y > 0\) with \(x \geq y\),
\[
\frac{\log(1+x)}{\log(1+y)} \leq \frac{x}{y}.
\]
\end{lemma}

\begin{proof}
Consider the function \(f(t) = \log(1+t)/t\) for \(t > 0\). Since \(f'(t) < 0\), \(f\) is strictly decreasing. Therefore, for \(x \geq y > 0\), we have \(f(x) \leq f(y)\), which is equivalent to the desired inequality.
\end{proof}

We now state and prove the main distortion theorem.

\begin{theorem} \label{thm:mobius-distortion}
Let \(a \in \mathbb{B}^n\) and \(\phi\) be a M\"obius transformation with \(\phi(\mathbb{B}^n) = \mathbb{B}^n\) and \(\phi(a) = 0\). Then for all \(x, y \in \mathbb{B}^n\),
\[
\frac{1 - |a|}{1 + |a|} \, \tilde{S}_{\mathbb{S}^{n-1},c}(x,y) \leq \tilde{S}_{\mathbb{S}^{n-1},c}(\phi(x),\phi(y)) \leq \frac{1 + |a|}{1 - |a|} \, \tilde{S}_{\mathbb{S}^{n-1},c}(x,y).
\]
\end{theorem}

\begin{proof}
The inequality is trivial if \(x = y\), so assume \(x \neq y\). If \(a = 0\), then \(\phi\) is an orthogonal transformation and the result is immediate. Now suppose \(a \neq 0\).

Since \(\tilde{S}_{\mathbb{S}^{n-1},c}\) is invariant under orthogonal transformations and \(\phi(x) = A(\sigma(x))\), where \(\sigma\) is a reflection in a sphere \(S^{n-1}(a^*, r)\) orthogonal to \(\mathbb{S}^{n-1}\) with \(r = \sqrt{|a^*|^2 - 1}\) and \(\sigma(a) = 0\), we have
\[
\tilde{S}_{\mathbb{S}^{n-1},c}(\phi(x),\phi(y)) = \tilde{S}_{\mathbb{S}^{n-1},c}(\sigma(x),\sigma(y)).
\]
For \(x, y \in \mathbb{B}^n\),
\[
|\sigma(x) - \sigma(y)| = \frac{r^2 |x-y|}{|x-a^*|\,|y-a^*|}, \quad |\sigma(x)| = |\sigma(x) - \sigma(a)| = \frac{|x-a|}{|a|\,|x-a^*|}.
\]
By Lemma \ref{lem:mobius-distortion},
\[
\frac{|x-a|}{|a|\,|x-a^*|} \leq \frac{|x| + |a|}{1 + |x||a|},
\]
which implies
\begin{equation} \label{eq:reflection-bound}
|x - a| \leq \frac{|a|(|x| + |a|)}{1 + |x||a|} |x - a^*|.
\end{equation}

Let us define
\[
A(|x-y|) = \frac{c(1 - |a|^2)|x-y|}{|a|}, \quad
B(t) = \sqrt{(2|a|^2 - |a|)t + 2|a| - |a|^2}.
\]
Then
\begin{align*}
&\frac{\tilde{S}_{\mathbb{S}^{n-1},c}(\sigma(x),\sigma(y))}{\tilde{S}_{\mathbb{S}^{n-1},c}(x,y)}
= \frac{\log\left(1 + \dfrac{c|\sigma(x)-\sigma(y)|}{\sqrt{2-|\sigma(x)|}\sqrt{2-|\sigma(y)|}} \right)}{\log\left(1 + \dfrac{c|x-y|}{\sqrt{2-|x|}\sqrt{2-|y|}} \right)} \\
&= \frac{\log\left(1 + \dfrac{A(|x-y|)}{\sqrt{|x-a^*|}\,\sqrt{|y-a^*|} \sqrt{2|a|\,|x-a^*| - |x-a|} \sqrt{2|a|\,|y-a^*| - |y-a|}} \right)}{\log\left(1 + \dfrac{c|x-y|}{\sqrt{2-|x|}\sqrt{2-|y|}} \right)}.
\end{align*}
Using inequality \eqref{eq:reflection-bound}, we obtain
\[
2|a|\,|x-a^*| - |x-a| \geq  2|a|\,|x-a^*|-\frac{|a|(|x|+|a|)}{1 + |a||x|}\,|x-a^*| = \frac{(2|a|^2-|a|)|x|+2|a|-|a|^2}{1 + |a||x|} |x-a^*|.
\]
Thus,
\[
\sqrt{2|a|\,|x-a^*| - |x-a|} \geq \frac{B(|x|)\sqrt{|x-a^*|}}{ \sqrt{1 + |a||x|} }.
\]
A similar bound holds for the term involving \(y\). Therefore,
\begin{align*}
\frac{\tilde{S}_{\mathbb{S}^{n-1},c}(\sigma(x),\sigma(y))}{\tilde{S}_{\mathbb{S}^{n-1},c}(x,y)}
&\leq \frac{\log\left(1 + \dfrac{A(|x-y|) \sqrt{1+|a||x|} \sqrt{1+|a||y|}}{B(|x|)B(|y|) |x-a^*|\,|y-a^*|} \right)}{\log\left(1 + \dfrac{c|x-y|}{\sqrt{2-|x|}\sqrt{2-|y|}} \right)} \\
&\leq \frac{\log\left(1 + \dfrac{A(|x-y|) \sqrt{1+|a||x|} \sqrt{1+|a||y|}}{B(|x|)B(|y|) (|a^*| - |x|)(|a^*| - |y|)} \right)}{\log\left(1 + \dfrac{c|x-y|}{\sqrt{2-|x|}\sqrt{2-|y|}} \right)},
\end{align*}
where the last inequality follows from \(|x - a^*| \geq |a^*| - |x|\).

Define the function
\[
G(s,t) = \frac{ \sqrt{1-|a|^2} \sqrt{1+|a|s} \sqrt{2-s} }{ (1-|a|s) \sqrt{(2|a|-1)s + 2 - |a|} } \cdot \frac{ \sqrt{1-|a|^2} \sqrt{1+|a|t} \sqrt{2-t} }{ (1-|a|t) \sqrt{(2|a|-1)t + 2 - |a|} }.
\]
Then the ratio of the arguments inside the logarithms is precisely \(G(|x|,|y|)\).

If \(G(|x|,|y|) \leq 1\), then clearly
\[
\frac{\tilde{S}_{\mathbb{S}^{n-1},c}(\sigma(x),\sigma(y))}{\tilde{S}_{\mathbb{S}^{n-1},c}(x,y)} \leq 1 \leq \frac{1+|a|}{1-|a|}.
\]
If \(G(|x|,|y|) > 1\), then by Lemma \ref{lem:log-inequality},
\[
\frac{\tilde{S}_{\mathbb{S}^{n-1},c}(\sigma(x),\sigma(y))}{\tilde{S}_{\mathbb{S}^{n-1},c}(x,y)} \leq G(|x|,|y|).
\]
Thus, it suffices to prove that
\[
\frac{ \sqrt{1-|a|^2} \sqrt{1+|a|s} \sqrt{2-s} }{ (1-|a|s) \sqrt{(2|a|-1)s + 2 - |a|} } \leq \sqrt{ \frac{1+|a|}{1-|a|} } \quad \text{for } s \in [0,1].
\]
Squaring both sides, this inequality is equivalent to
\[
(1-|a|)^2 (1+|a|s)(2-s) - \left( (2|a|-1)s + 2 - |a| \right) (1 - |a|s)^2 \leq 0.
\]
Define
\[
f(s) = (2|a|^2 - |a|) s^2 + (2|a|^2 - 5|a| + 3) s + 2|a| - 3.
\]
A simple computation shows that the previous inequality is equivalent to
\[
|a|(1-s) f(s) \leq 0.
\]
Since \(|a|(1-s) \geq 0\), we need to show \(f(s) \leq 0\) for \(s \in [0,1]\).

Differentiating \(f\) gives
\[
f'(s) = (4|a|^2 - 2|a|) s + (2|a|^2 - 5|a| + 3).
\]
We consider two cases:
\begin{itemize}
\item If \(0 \leq |a| < 1/2\), then \(4|a|^2 - 2|a| \leq 0\), and
\[
f'(s) \geq (4|a|^2 - 2|a|) + (2|a|^2 - 5|a| + 3) = 6|a|^2 - 7|a| + 3 > 0.
\]
\item If \(1/2 \leq |a| \leq 1\), then \(4|a|^2 - 2|a| \geq 0\), and
\[
f'(s) \geq 2|a|^2 - 5|a| + 3 \geq 0.
\]
\end{itemize}
In both cases, \(f'(s) \geq 0\) for \(s \in [0,1]\), so \(f\) is increasing. Therefore,
\[
f(s) \leq f(1) = 4|a|^2 - 4|a| \leq 0,
\]
which completes the proof of the upper bound.

For the lower bound, note that \(\phi^{-1}(x) = A^{-1}(\sigma^{-1}(x)) = A^{-1}(\sigma(x))\), and \(A^{-1}\) is orthogonal. Hence, for \(x, y \in \mathbb{B}^n\),
\[
\frac{\tilde{S}_{\mathbb{S}^{n-1},c}(x,y)}{\tilde{S}_{\mathbb{S}^{n-1},c}(\phi(x),\phi(y))} = \frac{\tilde{S}_{\mathbb{S}^{n-1},c}(\phi^{-1}(\phi(x)),\phi^{-1}(\phi(y)))}{\tilde{S}_{\mathbb{S}^{n-1},c}(\phi(x),\phi(y))} \leq \frac{1+|a|}{1-|a|},
\]
which implies
\[
\tilde{S}_{\mathbb{S}^{n-1},c}(\phi(x),\phi(y)) \geq \frac{1-|a|}{1+|a|} \tilde{S}_{\mathbb{S}^{n-1},c}(x,y).
\]
This establishes the lower bound and concludes the proof.
\end{proof}

\bibliographystyle{amsplain}
\bibliography{references}

@incollection{BeardonMinda2007,
    author = {Beardon, A. F. and Minda, D.},
    title = {The hyperbolic metric and geometric function theory},
    booktitle = {Quasiconformal Mappings and Their Applications},
    publisher = {Narosa},
    address = {New Delhi},
    year = {2007},
    pages = {9--56}
}

@book{KeenLakic2007,
    author = {Keen, L. and Lakic, N.},
    title = {Hyperbolic Geometry from a Local Viewpoint},
    series = {London Mathematical Society Student Texts},
    volume = {68},
    publisher = {Cambridge University Press},
    address = {Cambridge},
    year = {2007}
}

@article{GehringOsgood1979,
    author = {Gehring, F. W. and Osgood, B. G.},
    title = {Uniform domains and the quasihyperbolic metric},
    journal = {Journal d'Analyse Mathématique},
    volume = {36},
    pages = {50--74},
    year = {1979}
}

@article{GehringPalka1976,
    author = {Gehring, F. W. and Palka, B. P.},
    title = {Quasiconformally homogeneous domains},
    journal = {Journal d'Analyse Mathématique},
    volume = {30},
    pages = {172--190},
    year = {1976}
}

@incollection{Beardon1998,
    author = {Beardon, A. F.},
    title = {The {Apollonian} metric of a domain in $\mathbb{R}^n$},
    booktitle = {Quasiconformal Mappings and Analysis},
    editor = {Duren, P. and Heinonen, J. and Osgood, B. and Palka, B.},
    publisher = {Springer},
    address = {New York},
    year = {1998},
    pages = {91--108}
}

@article{Hasto2006,
    author = {Hästö, P.},
    title = {Gromov hyperbolicity of the $j_G$ and $\tilde{j}_G$ metrics},
    journal = {Proceedings of the American Mathematical Society},
    volume = {134},
    number = {4},
    pages = {1137--1142},
    year = {2006}
}

@article{HastoIbragimovLinden2006,
    author = {Hästö, P. and Ibragimov, Z. and Linden, H.},
    title = {Isometries of relative metrics},
    journal = {Computational Methods and Function Theory},
    volume = {6},
    number = {1},
    pages = {15--28},
    year = {2006}
}

@article{HastoLinden2004,
    author = {Hästö, P. and Linden, H.},
    title = {Isometries of the half-{Apollonian} metrics},
    journal = {Complex Variables, Theory and Application},
    volume = {49},
    number = {6},
    pages = {405--415},
    year = {2004}
}

@book{HaririKlenVuorinen2020,
    author = {Hariri, P. and Klén, R. and Vuorinen, M.},
    title = {Conformally Invariant Metrics and Quasiconformal Mappings},
    series = {Springer Monographs in Mathematics},
    publisher = {Springer},
    address = {Cham},
    year = {2020}
}

@article{HaririVuorinenZhang2017,
    author = {Hariri, P. and Vuorinen, M. and Zhang, X.},
    title = {Inequalities and bi-{L}ipschitz conditions for triangular ratio metric},
    journal = {Rocky Mountain Journal of Mathematics},
    volume = {47},
    number = {4},
    pages = {1121--1148},
    year = {2017}
}

@article{Ibragimov2016,
    author = {Ibragimov, Z.},
    title = {A scale-invariant {Cassinian} metric},
    journal = {Journal of Analysis},
    volume = {24},
    pages = {111--129},
    year = {2016}
}

@article{Ibragimov2019,
    author = {Ibragimov, Z.},
    title = {{Möbius} invariant {Cassinian} metric},
    journal = {Bulletin of the Malaysian Mathematical Sciences Society},
    volume = {42},
    pages = {1349--1367},
    year = {2019}
}

@article{Seittenranta1999,
    author = {Seittenranta, P.},
    title = {{Möbius}-invariant metrics},
    journal = {Mathematical Proceedings of the Cambridge Philosophical Society},
    volume = {125},
    number = {3},
    pages = {511--533},
    year = {1999}
}

@book{Vuorinen1988,
    author = {Vuorinen, M.},
    title = {Conformal Geometry and Quasiregular Mappings},
    series = {Lecture Notes in Mathematics},
    volume = {1319},
    publisher = {Springer},
    address = {Berlin},
    year = {1988}
}

@article{WangVuorinenZhang2021,
    author = {Wang, G. and Vuorinen, M. and Zhang, X.},
    title = {On cyclic quadrilaterals in {E}uclidean and hyperbolic geometries},
    journal = {Publicationes Mathematicae Debrecen},
    volume = {99},
    number = {1-2},
    pages = {123--140},
    year = {2021}
}

@article{WangXuVuorinen2021,
    author = {Wang, G. and Xu, X. and Vuorinen, M.},
    title = {Remarks on the scale-invariant {Cassinian} metric},
    journal = {Bulletin of the Malaysian Mathematical Sciences Society},
    volume = {44},
    pages = {1559--1577},
    year = {2021}
}

@article{XuWangZhang2022,
    author = {Xu, X. and Wang, G. and Zhang, X.},
    title = {Comparison and {Möbius} quasi-invariance properties of {Ibragimov}'s metric},
    journal = {Computational Methods and Function Theory},
    volume = {22},
    pages = {609--627},
    year = {2022}
}

@article{Hasto2002,
    author = {Hästö, P. A.},
    title = {A new weighted metric, the relative metric},
    journal = {Journal of Mathematical Analysis and Applications},
    volume = {274},
    number = {1},
    pages = {38--58},
    year = {2002}
}

@article{BonkKleiner2002,
    author = {Bonk, M. and Kleiner, B.},
    title = {Rigidity for quasi-{Möbius} group actions},
    journal = {Journal of Differential Geometry},
    volume = {61},
    number = {1},
    pages = {81--106},
    year = {2002}
}

@article{CuiXiao2022,
    author = {Cui, Y. M. and Xiao, Y. Q.},
    title = {A new intrinsic metric on metric spaces},
    journal = {Bulletin of the Malaysian Mathematical Sciences Society},
    volume = {45},
    pages = {2941--2958},
    year = {2022}
}

@article{ZhangXiao2019,
    author = {Zhang, Z. Q. and Xiao, Y. Q.},
    title = {Strongly hyperbolic metrics on {Ptolemy} spaces},
    journal = {Journal of Mathematical Analysis and Applications},
    volume = {478},
    number = {1},
    pages = {445--457},
    year = {2019}
}

@book{Beardon1995,
    author = {Beardon, A. F.},
    title = {The Geometry of Discrete Groups},
    series = {Graduate Texts in Mathematics},
    volume = {91},
    publisher = {Springer},
    address = {New York},
    year = {1995}
}

@book{GehringMartinPalka2017,
    author = {Gehring, F. W. and Martin, G. J. and Palka, B. P.},
    title = {An Introduction to the Theory of Higher-Dimensional Quasiconformal Mappings},
    series = {Mathematical Surveys and Monographs},
    volume = {216},
    publisher = {American Mathematical Society},
    address = {Providence, RI},
    year = {2017}
}

@book{Vaisala1971,
    author = {Väisälä, J.},
    title = {Lectures on $n$-Dimensional Quasiconformal Mappings},
    series = {Lecture Notes in Mathematics},
    volume = {229},
    publisher = {Springer},
    address = {Berlin},
    year = {1971}
}

@article{SimicVuorinenWang2015,
    author = {Simic, S. and Vuorinen, M. and Wang, G. D.},
    title = {Sharp {L}ipschitz constants for the distance ratio metric},
    journal = {Mathematica Scandinavica},
    volume = {116},
    number = {1},
    pages = {86--103},
    year = {2015}
}

\end{document}